\newtheorem{theorem}{Theorem}
\newtheorem{remark}[theorem]{Remark}
\newtheorem{example}[theorem]{Example}
\newcommand{\be}{\begin{equation}}
\newcommand{\ee}{\end{equation}}
\newcommand{\bea}{\begin{eqnarray*}}
	\newcommand{\eea}{\end{eqnarray*}}
\newcommand{\beq}{\begin{eqnarray}}
\newcommand{\eeq}{\end{eqnarray}}
\newcommand{\mV}{\mathsf V}
\newcommand{\mv}{\mathsf v}
\newcommand{\mE}{\mathsf E}
\newcommand{\me}{\mathsf e}
\newtheorem{prop}[theorem]{Proposition}
\newtheorem{cor}[theorem]{Corollary}
\title[A note on Ambarzumian's theorem]{A note on Ambarzumian's theorem for quantum graphs} 
\subjclass[2010]{}
\keywords{}
\author[P.~Bifulco]{Patrizio Bifulco}
\author[J.~Kerner]{Joachim Kerner}
\address{Lehrgebiet Analysis, Fakult\"at Mathematik und Informatik, Fern\-Universit\"at in Hagen, D-58084 Hagen, Germany}
\email{patrizio.bifulco@fernuni-hagen.de}
\email{joachim.kerner@fernuni-hagen.de}
\date{\today}
\thanks{
}
\begin{document}
	
	\begin{abstract} Based on the main result presented in \cite{BifulcoKerner}, we derive \break Ambarzumian--type theorems for Schrödinger operators defined on quantum graphs. We recover existing results such as the classical theorem by Ambarzumian and establish some seemingly new statements, too. 
	\end{abstract}
	
\maketitle

Let $\mathcal{G} = (\mV,\mE)$ be a finite, connected, compact metric graph with vertex set $\mV$ and edge set $\mE$, see, for instance, \cite{Mugnolo} for a more detailed introduction. Note that we also assume, without loss of generality, that the graph has no loops. The graph Hilbert space is
$$L^2(\mathcal{G})=\bigoplus_{\me \in \mE} L^2(0,\ell_\me)\ ,$$
where $\ell_\me \in (0,\infty)$ denotes the edge length of the edge $\me \in \mE$. On $L^2(\mathcal{G})$, assuming -- as in \cite{BifulcoKerner} -- that
\[
q =(q_\me)_{\me \in \mE} \in L^\infty(\mathcal G) \cap \bigoplus_{\me \in \mE} C^\infty(0,\ell_\me)\ ,
\]
and $\sigma=(\sigma_\mv)_{\mv \in \mV} \in \mathbb{R}^{|\mV|}$, we introduce the self-adjoint Schrödinger operator
\begin{equation*}\label{SchrödingerOperator}
H_\mathcal{G}^{q,\sigma}=-\frac{\mathrm{d}^2}{\mathrm{d}x^2}+q(x)
\end{equation*}
associated with the quadratic form, 
$$h_\mathcal{G}^{q,\sigma}(f) =  \int_\mathcal{G} \vert f^{\prime} \vert^2\ \mathrm{d}x + \int_\mathcal{G} q \vert f \vert^2\ \mathrm{d}x+\sum_{v \in \mV}\sigma_v|f(v)|^2$$ 
with form domain $H^1(\mathcal{G}) := \bigoplus_{\me \in \mE} H^1(0,\ell_\me) \cap C(\mathcal{G})$; here $C(\mathcal{G})$ denotes the space of continous functions on $\mathcal{G}$. Note that functions $f \in C(\mathcal{G})$ are, in particular, continuous across vertices. Although this is not important for us in this paper, we shall recall that each $\sigma_\mv$ determines the so-called \textit{matching conditions} in a given vertex $\mv \in \mV$ of the graph. More explicitly, the sum of all normal derivates of functions (pointing into the adjacent edges) equals $\sigma_\mv$ times the value of the functions in the vertex $\mv \in \mV$ which is well-defined due to continuity of the functions across vertices. Whenever $\sigma_v=0$ one speaks of \emph{standard} matching conditions whereas $\sigma_v \neq 0$ refers to so-called \emph{$\delta$-coupling conditions}.

Since we only look at finite metric graphs, the operator $H_\mathcal{G}^{q,\sigma}$ has compact resolvent and therefore purely discrete spectrum; we denote its eigenvalues by
\[
\lambda_0^{q,\sigma} < \lambda_1^{q,\sigma} \leq \lambda_2^{q,\sigma} \leq \dots \rightarrow +\infty\ .
\]
As a main result of~\cite{BifulcoKerner}, the authors established the relation
\begin{equation}\label{Result}
\lim_{N \rightarrow \infty}\frac{1}{N}\sum_{n=1}^{N}(\lambda_n^{q,\sigma}-\lambda_n^{0})=\frac{1}{\mathcal{L}}\int_{\mathcal{G}}q\ \mathrm{d}x+\frac{2}{\mathcal{L}}\sum_{v \in \mV}\frac{\sigma_v}{\deg(v)}\ ,
\end{equation}
 where $\mathcal{L}=\sum_{e \in \mE} \ell_e$ denotes the \emph{total length} of the graph $\mathcal{G}$ and $\deg(\mv)$ is the \emph{degree} of a vertex $\mv \in \mV$. Here, $(\lambda_n^0)_{n \in \mathbb{N}_0}$ shall denote the eigenvalues of the operator $H^{q=0,\sigma=0}_\mathcal{G}:=H^{0}_\mathcal{G}$.

Using~\eqref{Result} and based on the well-known papers~\cite{Ambarz,Borg,Levinson,HochSL,HochLieb}, the classical question we would like to address in this note is the following: Assuming we know that the eigenvalues of a Schrödinger operator $H^{q,\sigma}_\mathcal{G}$ are identical to those of $H^{0}_\mathcal{G}$, is it possible to conclude that $q=0$ as well as $\sigma=0$? Or more generally, is it possible to reconstruct the potential and the matching conditions in the vertices of the graph by knowing the spectrum of $H_\mathcal{G}^{q,\sigma}$? The second question was already answered in the negative by Borg in~\cite{Borg} where he looked at Laplacians on intervals subject to Robin boundary conditions, see also~\cite{Levinson}. However, as already noted by Ambarzumian in~\cite{Ambarz}, assuming the eigenvalues of $H^{q,\sigma=0}_\mathcal{G}$ on an interval are exactly those of the Neumann Laplacian on this interval, then one necessarily has that $q=0$. As we shall see in the sequel, a key role is played by the fact that the lowest Neumann eigenvalue on a graph (that is, for $\sigma=0$) is zero, which was already noted by Borg.

In a first step, we provide a generalization of~\cite[Theorem~3.1]{Davies:2013}.
\begin{prop}\label{GenDavies} Assume that the lowest eigenvalue of $H_\mathcal{G}^{q,\sigma}$ is non-negative and that
$$\int_{\mathcal{G}}q\ \mathrm{d}x + \sum_{\mv \in \mV}\sigma_\mv\leq 0\ .$$
Then $q=0$ and $\sigma=0$.
\end{prop}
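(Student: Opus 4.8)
The plan is to exploit the variational characterization of the lowest eigenvalue together with the most natural test function available on a metric graph, namely a constant. Recall that
$$\lambda_0^{q,\sigma} = \inf_{0 \neq f \in H^1(\mathcal{G})} \frac{h_\mathcal{G}^{q,\sigma}(f)}{\|f\|_{L^2(\mathcal{G})}^2}\ .$$
Since constant functions lie in $H^1(\mathcal{G})$ -- being trivially continuous across every vertex -- I would insert $f \equiv 1$. Because then $f' = 0$ and $|f(\mv)|^2 = 1$ at each vertex, the form collapses to $h_\mathcal{G}^{q,\sigma}(1) = \int_\mathcal{G} q\,\mathrm{d}x + \sum_{\mv \in \mV} \sigma_\mv$, while $\|1\|_{L^2(\mathcal{G})}^2 = \mathcal{L}$. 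The variational principle therefore yields $\lambda_0^{q,\sigma} \leq \mathcal{L}^{-1}\big(\int_\mathcal{G} q\,\mathrm{d}x + \sum_{\mv} \sigma_\mv\big)$.

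Next I would combine this estimate with the two hypotheses. The assumption $\int_\mathcal{G} q\,\mathrm{d}x + \sum_{\mv} \sigma_\mv \leq 0$ forces $\lambda_0^{q,\sigma} \leq 0$, while the non-negativity assumption gives $\lambda_0^{q,\sigma} \geq 0$. Hence every inequality in the chain $0 \leq \lambda_0^{q,\sigma} \leq \mathcal{L}^{-1}\big(\int_\mathcal{G} q\,\mathrm{d}x + \sum_{\mv} \sigma_\mv\big) \leq 0$ is actually an equality. In particular $\lambda_0^{q,\sigma} = 0$, and the constant function realizes the infimum of the Rayleigh quotient. Since $h_\mathcal{G}^{q,\sigma}$ is a closed, semibounded form, a minimizer of its Rayleigh quotient is a genuine eigenfunction; thus $f \equiv 1$ is a ground state for the eigenvalue $0$.

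From here I would pass to the weak eigenvalue equation and localize. Being an eigenfunction for eigenvalue $0$ means $h_\mathcal{G}^{q,\sigma}(1,g) = 0$ for all $g \in H^1(\mathcal{G})$; as the gradient term drops out, this reads
$$\int_\mathcal{G} q\,\overline{g}\,\mathrm{d}x + \sum_{\mv \in \mV} \sigma_\mv\,\overline{g(\mv)} = 0 \qquad \text{for all } g \in H^1(\mathcal{G})\ .$$
To separate the two contributions I would first test against functions $g$ compactly supported in the interior of a single edge, so that all vertex values vanish and only $\int_\mathcal{G} q\,\overline{g}\,\mathrm{d}x = 0$ survives; ranging over such $g$ and invoking the smoothness of $q$ on each edge forces $q \equiv 0$ edge by edge. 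Feeding $q = 0$ back in leaves $\sum_{\mv} \sigma_\mv\,\overline{g(\mv)} = 0$ for all $g$, and choosing for each fixed vertex a continuous $H^1$ ``hat'' function equal to $1$ there and $0$ at every other vertex yields $\sigma_\mv = 0$, hence $\sigma = 0$.

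The substance of the argument is the sandwiching step: the two sign hypotheses pinch $\lambda_0^{q,\sigma}$ to exactly $0$ and, crucially, certify that the explicit constant test function is a minimizer, hence an eigenfunction. Everything afterwards is routine localization. The only points demanding a little care are the standard fact that a Rayleigh-quotient minimizer of a closed semibounded form is an eigenfunction, and the elementary construction of admissible continuous $H^1$ test functions with prescribed vertex values on a finite metric graph.
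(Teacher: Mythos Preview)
Your argument is correct and in fact more direct than the paper's. Both proofs share the first step: testing with the constant function and sandwiching to get $\lambda_0^{q,\sigma}=0$. From there the paper follows Davies and runs a perturbation argument along the ray $\tau\mapsto(\tau q,\tau\sigma)$: it invokes simplicity of the ground state to obtain analyticity and concavity of $\tau\mapsto\lambda_0^{\tau q,\tau\sigma}$, deduces that this curve is identically zero, and then argues by contradiction (with suitable localized test functions) that $q\geq 0$ and $\sigma\geq 0$, which together with the integral constraint forces $q=0$, $\sigma=0$. You instead observe that the sandwiching not only pins down $\lambda_0^{q,\sigma}=0$ but shows the constant function \emph{attains} the infimum, hence is a genuine eigenfunction; the weak eigenvalue equation $h_\mathcal{G}^{q,\sigma}(1,g)=0$ then kills $q$ and each $\sigma_\mv$ by elementary localization. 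Your route avoids the machinery of analytic perturbation theory and the citation for ground-state non-degeneracy, at the modest cost of the (entirely standard) fact that a Rayleigh-quotient minimizer for a closed semibounded form lies in the operator domain. The paper's approach, on the other hand, yields the intermediate sign information $q\geq 0$, $\sigma\geq 0$ along the way, which is of some independent interest.
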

 \begin{proof} The proof follows the one given by Davies. One first observes that, inserting the function which is constant equal to one on $\mathcal{G}$ into the quadratic form $h_\mathcal{G}^{q,\sigma}(\cdot)$, the min-max principle yields the inequality
 $$0 \leq \lambda_0^{q,\sigma}\leq \frac{1}{\mathcal{L}}\left(\int_{\mathcal{G}}q\ \mathrm{d}x+\sum_{\mv \in \mV}\sigma_\mv\right)\leq 0$$
and hence we conclude $\lambda_0^{q,\sigma}=0$. 

The next step then consists of showing that $q\geq 0$: To prove this one uses the analyticity as well as the concavity of the eigenvalue curve $\lambda_0^{\tau q,\tau\sigma}$, $\tau \in \mathbb{R}$ (note that the ground state eigenvalue is also known to be non-degenerate, see~\cite{KurasovUniqueness}). Namely, since $\lambda_0^{q,\sigma}=0$ and $\lambda_0^{q=0,\sigma=0}=0$, one concludes that actually $\lambda_0^{\tau q,\tau \sigma}=0$ for all $\tau \in \mathbb{R}$. Now, assuming $q$ is negative somewhere, one chooses a suitable test function $g \in H^1(\mathcal{G})$ such that $h_\mathcal{G}^{\tau q,\tau \sigma}(g) < 0$ for some $\tau$ large enough, leading to a contradiction. 

In the same way, whenever $\sigma_\mv < 0$ for some $\mv \in \mV$, then one again picks a suitable test function $\hat{g} \in H^1(\mathcal{G})$ supported in the vicinity of the vertex $\mv$ (in a way that in particular $\int_\mathcal{G} q \vert \hat{g} \vert^2 \ \mathrm{d}x$ is small enough) and for which one has
$$h_\mathcal{G}^{\tau q,\tau \sigma}(\hat{g}) =  \int_\mathcal{G} \vert \hat{g}^{\prime} \vert^2\ \mathrm{d}x + \tau \left( \int_\mathcal{G} q \vert \hat{g} \vert^2\ \mathrm{d}x+\sigma_\mv|\hat{g}(\mv)|^2 \right) < 0$$
for $\tau$ large enough, also contradicting $\lambda_0^{\tau q,\tau\sigma}=0$. 

Now, from $q\geq 0$ we conclude
\begin{equation*}\label{EqProofDavies}
    \sum_{\mv \in \mV}\sigma_\mv\leq 0 \ .
\end{equation*} 
Since also $\sigma \geq 0$, this immediately yields $\sigma=0$ and consequently
$$\int_{\mathcal{G}}q\ \mathrm{d}x = 0 \ ,$$
implying $q=0$.
\end{proof}


Using Proposition~\ref{GenDavies}, we can write down an abstract version of Ambarzumian's theorem for quantum graphs which then subsequently allows for a simple proof of the classical theorem by Ambarzumian from~\cite{Ambarz}, including its version on graphs, see \cite{BKS,PKUnderstanding}.
\begin{theorem}[Abstract inverse spectral problem]\label{MainResult}
    Assume that $H_\mathcal{G}^{q,\sigma}$ is a Schrödinger operator on a metric graph $\mathcal{G}$ such that 
    \begin{align}\label{eq:abstract-condition-ambartsumian}
    \int_\mathcal{G} q\ \mathrm{d}x + 2 \sum_{\mv \in \mV} \frac{\sigma_\mv}{\deg(\mv)} = 0 \ \Longrightarrow \ \int_\mathcal{G} q\ \mathrm{d}x + \sum_{\mv \in \mV} \sigma_\mv \leq 0 \ .
    \end{align}
    Then, if $\lambda_{n}^{q,\sigma} = \lambda_n^0$ for all $n \in \mathbb{N}_0 := \{0,1,2,\dots\}$, where $\lambda_n^0$ are the eigenvalues of $H_\mathcal{G}^0$, one has that $q = 0$ and $\sigma=0$.
\end{theorem}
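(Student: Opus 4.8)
The plan is to combine the Cesàro trace formula~\eqref{Result} with Proposition~\ref{GenDavies}; the isospectrality hypothesis feeds directly into the former, whose output is exactly the premise needed to trigger the latter. First I would exploit the assumption $\lambda_n^{q,\sigma} = \lambda_n^0$ for all $n \in \mathbb{N}_0$: term by term the differences $\lambda_n^{q,\sigma} - \lambda_n^0$ vanish, so the averaged limit on the left-hand side of~\eqref{Result} is zero. Reading off the right-hand side then gives
$$\int_\mathcal{G} q\ \mathrm{d}x + 2 \sum_{\mv \in \mV} \frac{\sigma_\mv}{\deg(\mv)} = 0\ ,$$
which is precisely the premise of the implication in~\eqref{eq:abstract-condition-ambartsumian}.

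Invoking the standing hypothesis~\eqref{eq:abstract-condition-ambartsumian} then yields
$$\int_\mathcal{G} q\ \mathrm{d}x + \sum_{\mv \in \mV} \sigma_\mv \leq 0\ ,$$
which is exactly the second assumption required by Proposition~\ref{GenDavies}. It remains only to check the first assumption of that proposition, namely non-negativity of the lowest eigenvalue of $H_\mathcal{G}^{q,\sigma}$. For this I would use that the constant function lies in the form domain $H^1(\mathcal{G})$ and that, for $q=0$ and $\sigma=0$, it is a null eigenfunction; since $h_\mathcal{G}^{0,0}(\cdot)\geq 0$, this identifies $\lambda_0^0 = 0$. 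By isospectrality $\lambda_0^{q,\sigma} = \lambda_0^0 = 0 \geq 0$, so the ground state of $H_\mathcal{G}^{q,\sigma}$ is non-negative as needed. With both hypotheses verified, Proposition~\ref{GenDavies} applies and forces $q=0$ and $\sigma=0$.

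The argument is deliberately short because the analytic effort has been front-loaded: the quantitative spectral content sits in~\eqref{Result}, while the concavity-and-analyticity machinery that converts a single sign condition into the rigidity conclusion $q=0$, $\sigma=0$ is packaged inside Proposition~\ref{GenDavies}. Accordingly, I do not expect a genuine obstacle so much as one point demanding care, namely the identification $\lambda_0^0 = 0$, that is, the vanishing of the lowest standard (Neumann-type) eigenvalue. This is exactly the structural feature emphasized in the introduction as going back to Borg, and it is indispensable here: without it the non-negativity hypothesis of Proposition~\ref{GenDavies} could not be discharged, and the implication~\eqref{eq:abstract-condition-ambartsumian} alone would not suffice to conclude.
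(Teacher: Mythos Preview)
Your proposal is correct and follows essentially the same route as the paper: apply~\eqref{Result} to the isospectral hypothesis, invoke~\eqref{eq:abstract-condition-ambartsumian}, and feed the resulting inequality into Proposition~\ref{GenDavies}. You are in fact slightly more careful than the paper's own proof, which leaves the verification of the non-negativity hypothesis $\lambda_0^{q,\sigma}=\lambda_0^0=0$ implicit.
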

\begin{proof}
    If $\lambda_n^{q,\sigma} = \lambda_n^0$ for all $n \in \mathbb{N}_0$, it follows by \eqref{Result} that $\int_\mathcal{G} q \ \mathrm{d}x + 2\sum_{\mv \in \mV} \frac{\sigma_\mv}{\deg(\mv)} = 0$. Thus, by assumption, it follows that $\int_\mathcal{G}q \ \mathrm{d}x + \sum_{\mv \in \mV} \sigma_v \leq 0$ and by Proposition \ref{GenDavies}, it follows that $\sigma = 0$ and $q = 0$.
\end{proof}
 Note that in Theorem \ref{MainResult} it is enough to assume that the spectra are \emph{asymptotically close} in the sense that $\lambda_n^{q,\sigma} - \lambda_n^0 \rightarrow 0$, $n \rightarrow \infty$, as this implies the convergence of the averages in \eqref{Result}; this could be compared to~\cite[Theorem~8]{KurasovSuhrAsymp}. 
\begin{example}   
   The condition formulated in Theorem~\ref{eq:abstract-condition-ambartsumian} gives a rather simple criterion for a Schrödinger operator $H_\mathcal{G}^{q,\sigma}$ to have a spectrum different from that of the operator $H^{0}_\mathcal{G}$. This is for instance the case for the Laplacian $H_\mathcal{G}^{0,\sigma} =:-\Delta_\mathcal{G}$ on a $3$-star with a negative $\delta$-type condition at the central vertex and natural conditions on the outer vertices (see Figure~1).
\end{example}
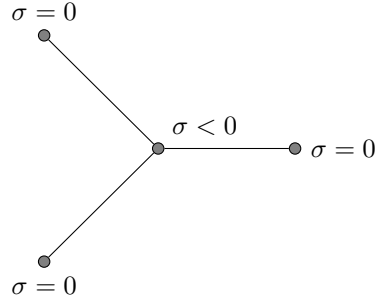
\begin{figure}[h]\label{fig:3-star}
		\begin{tikzpicture}[scale=0.60]
		\tikzset{enclosed/.style={draw, circle, inner sep=0pt, minimum size=.15cm, fill=gray}, every loop/.style={}}
		\node[enclosed, label={below: $\sigma=0$}] (V1) at (2.5,1.5) {};
		\node[enclosed, label={right: $\sigma=0$}] (V2) at (8,4) {};
  \node[enclosed, label={above: $\sigma=0$}] (V3) at (2.5,6.5) {};
		\node[enclosed, label={above right: $\sigma < 0$}] (C) at (5,4) {};
		
		\draw (V1) edge [] node[above] {} (C) node[midway, above] (edge1) {};
		\draw (C) edge [] node[below] {} (V2) node[midway, above] (edge2) {};
        \draw (C) edge [] node[below] {} (V3) node[midway, above] (edge3) {};
		\end{tikzpicture}
  \vspace{-0.5cm}
		\caption{$3$-star with central vertex $\mv_c$ and boundary vertices $\mv_1,\mv_2,\mv_3$}
	\end{figure}
As alluded to above, we list some direct consequences of  Theorem~\ref{MainResult}, leading to another proof of the classical Ambarzumian theorem, discussed in~\cite{Ambarz,Borg}, regarding Schrödinger operators on intervals and subject to Neumann boundary conditions, as well as another proof of Ambarzumian's theorem for Schrödinger operators on quantum graphs with standard matching conditions, meaning that $\sigma=0$, see~\cite{PivoAmba,BKS,YANG20161348,Pivovarchik2022,KissAmba}.
\begin{cor}[Ambarzumian theorem for $\delta$-coupling conditions on graphs]\label{cor:ambarzumian-delta-couplings}  Assume that $H_\mathcal{G}^{q,\sigma}$ is a Schrödinger operator defined over a metric graph $\mathcal{G}$ such that 
    $$\lambda_n^{q,\sigma} =\lambda_n^0\ , \quad n=0,1,\dots\ , $$
    where $\lambda_n^0$ are the eigenvalues of $H_\mathcal{G}^0$.  Then,
    whenever one of the following assertions is satisfied,
    \begin{itemize}
        \item[(i)] $\sum_{\mv \in \mV} \sigma_\mv \Big(1-\frac{2}{\deg(\mv)} \Big) \leq 0$,
        \item[(ii)] $\int_\mathcal{G} q \ \mathrm{d}x \geq 0$ and $\sigma \in [0,\infty)^{\vert \mV \vert}$,
        \item[(iii)] $\Big( \frac{\Delta_{\min}}{2}-1 \Big) \int_\mathcal{G} q \ \mathrm{d}x \geq 0$ and $\sigma \in (-\infty, 0]^{\vert \mV \vert}$, where $\Delta_{\min} := \min_{\mv \in \mV} \deg(\mv)$ denotes the minimal degree of the underlying graph $\mathcal{G}$,
    \end{itemize}
    one has that $q = 0$ and $\sigma=0$.
\end{cor}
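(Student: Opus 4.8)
The plan is to reduce all three cases to the abstract implication~\eqref{eq:abstract-condition-ambartsumian} and then to invoke Theorem~\ref{MainResult}. Since $\lambda_n^{q,\sigma} = \lambda_n^0$ for all $n \in \mathbb{N}_0$, relation~\eqref{Result} immediately yields the identity
\begin{equation*}
\int_\mathcal{G} q\ \mathrm{d}x = -2 \sum_{\mv \in \mV} \frac{\sigma_\mv}{\deg(\mv)}\ .
\end{equation*}
In other words, the hypothesis of~\eqref{eq:abstract-condition-ambartsumian} always holds under the standing assumption, and it remains only to verify, in each of the three cases, the conclusion $\int_\mathcal{G} q\ \mathrm{d}x + \sum_{\mv \in \mV} \sigma_\mv \leq 0$; the assertion $q=0$, $\sigma=0$ then follows directly from Theorem~\ref{MainResult}.

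For case~(i), I would simply substitute the displayed identity to obtain the algebraic relation
\begin{equation*}
\int_\mathcal{G} q\ \mathrm{d}x + \sum_{\mv \in \mV} \sigma_\mv = \sum_{\mv \in \mV} \sigma_\mv - 2\sum_{\mv \in \mV} \frac{\sigma_\mv}{\deg(\mv)} = \sum_{\mv \in \mV} \sigma_\mv \Big(1 - \frac{2}{\deg(\mv)}\Big)\ ,
\end{equation*}
so that the inequality assumed in~(i) is literally the bound required by~\eqref{eq:abstract-condition-ambartsumian}. This case is therefore immediate, and it essentially exhibits where the three conditions come from. For case~(ii), the hypothesis $\int_\mathcal{G} q\ \mathrm{d}x \geq 0$ together with the identity above forces $\sum_{\mv} \sigma_\mv/\deg(\mv) \leq 0$; since each $\sigma_\mv \geq 0$ and each degree is positive, every summand is non-negative, whence the weighted sum vanishes and in fact $\sigma = 0$. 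Then $\int_\mathcal{G} q\ \mathrm{d}x = 0$ and the required inequality holds trivially.

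For case~(iii), I would use that $\sigma_\mv \leq 0$ and $\deg(\mv) \geq \Delta_{\min} > 0$ give $\sigma_\mv/\deg(\mv) \geq \sigma_\mv/\Delta_{\min}$ (multiplying the degree bound by the non-positive number $\sigma_\mv$ reverses the inequality), so that after summing, $\sum_{\mv} \sigma_\mv \leq \Delta_{\min} \sum_{\mv} \sigma_\mv/\deg(\mv)$. Combining this with $\sum_{\mv} \sigma_\mv/\deg(\mv) = -\frac{1}{2}\int_\mathcal{G} q\ \mathrm{d}x$ then produces
\begin{equation*}
\int_\mathcal{G} q\ \mathrm{d}x + \sum_{\mv \in \mV} \sigma_\mv \leq \Big(1 - \frac{\Delta_{\min}}{2}\Big)\int_\mathcal{G} q\ \mathrm{d}x = -\Big(\frac{\Delta_{\min}}{2} - 1\Big)\int_\mathcal{G} q\ \mathrm{d}x \leq 0\ ,
\end{equation*}
where the final step is exactly the sign condition assumed in~(iii).

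All three computations are elementary once the identity furnished by~\eqref{Result} is in place, and no genuine analytic obstacle arises, since the three hypotheses are precisely tailored to deliver the inequality demanded by Theorem~\ref{MainResult}. The only point requiring a little care is the direction of the inequality in case~(iii), where multiplying the degree bound $\deg(\mv) \geq \Delta_{\min}$ by the non-positive quantity $\sigma_\mv$ flips the sign; keeping track of this sign is the sole delicate step of the argument.
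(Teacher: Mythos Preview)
Your proof is correct and follows essentially the same strategy as the paper: in each case you verify the implication~\eqref{eq:abstract-condition-ambartsumian} and then invoke Theorem~\ref{MainResult}. The algebraic manipulations in cases~(i)--(iii) match the paper's almost verbatim, with only a cosmetic difference in how the inequality in case~(iii) is organized.
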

\begin{proof} In order to employ Theorem~\ref{MainResult} it is sufficient to verify \eqref{eq:abstract-condition-ambartsumian}. 

If (i) holds, then this follows immediatly by calculation. Also, whenever (ii) holds, $\int_\mathcal{G} q \ \mathrm{d}x + 2\sum_{\mv} \frac{\sigma_\mv}{\deg(\mv)} = 0$ immediately gives $\int_\mathcal{G} q \ \mathrm{d}x = 0$ and $\sum_{\mv} \sigma_\mv=0$ and hence the desired inclusion. 

Finally, assume (iii) holds and that $\int_\mathcal{G} q \ \mathrm{d}x + 2\sum_{\mv} \frac{\sigma_\mv}{\deg(\mv)} = 0$. 
Then, multiplying both sides with the minimal degree $\Delta_{\min} := \min_{\mv \in \mV} \deg(\mv)$ it follows that
\[
\int_\mathcal{G} q \ \mathrm{d}x + \sum_{\mv \in \mV} \sigma_\mv \leq \frac{\Delta_{\min}}{2} \int_\mathcal{G} q \ \mathrm{d}x +  \sum_{\mv \in \mV} \frac{\Delta_{\min}}{\deg(\mv)} \sigma_\mv = 0
\]
and hence \eqref{eq:abstract-condition-ambartsumian} is indeed satisfied.
\end{proof}

To single out yet another case contained in Corollary~\ref{cor:ambarzumian-delta-couplings}, we call a graph a \textit{line graph} whenever $\deg(\mv) \in \{1,2\}$ for all $\mv \in \mV$. This is the case, for example, whenever the graph is actually an interval. Furthermore, we call a line graph a \textit{loop graph} whenever $\deg(\mv)=2$ for all $\mv \in \mV$.
\begin{cor}[Ambarzumian's theorem for loop and line graphs]  Assume that $H_\mathcal{G}^{q,\sigma}$ is a Schrödinger operator defined over a line graph such that
    $$\lambda_n^{q,\sigma} =\lambda_n^0\ , \quad n=0,1,\dots\ , $$
    where $\lambda_n^0$ are the eigenvalues of $H_\mathcal{G}^0$. Then, if either $\mathcal{G}$ is a loop graph or $\sigma \geq 0$ one has that $q = 0$ and $\sigma=0$.
\end{cor}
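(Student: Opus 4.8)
The plan is to derive both assertions from Corollary~\ref{cor:ambarzumian-delta-couplings} by checking its hypothesis~(i), namely that
\[
\sum_{\mv \in \mV} \sigma_\mv \Big(1 - \frac{2}{\deg(\mv)}\Big) \leq 0 \ .
\]
Since the eigenvalue assumption $\lambda_n^{q,\sigma} = \lambda_n^0$ is already built into that corollary, no further spectral input is needed; everything reduces to a degree count on the specific class of graphs at hand.

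First I would treat the loop graph case. Here every vertex satisfies $\deg(\mv) = 2$, so the weight $1 - 2/\deg(\mv)$ equals zero at each vertex and the sum above vanishes identically, regardless of the values of $\sigma$. Thus~(i) holds trivially and Corollary~\ref{cor:ambarzumian-delta-couplings} applies, yielding $q=0$ and $\sigma=0$.

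For the line graph case under the assumption $\sigma \geq 0$, I would split the vertex set according to degree. Vertices of degree two again contribute nothing, since $1 - 2/2 = 0$, while each vertex $\mv$ of degree one contributes $\sigma_\mv(1 - 2) = -\sigma_\mv$. Hence
\[
\sum_{\mv \in \mV} \sigma_\mv \Big(1 - \frac{2}{\deg(\mv)}\Big) = - \sum_{\substack{\mv \in \mV \\ \deg(\mv) = 1}} \sigma_\mv \leq 0 \ ,
\]
where the inequality uses $\sigma_\mv \geq 0$. So~(i) is again satisfied and the conclusion $q = 0$, $\sigma = 0$ follows from Corollary~\ref{cor:ambarzumian-delta-couplings}.

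There is no genuine obstacle in this argument; the only point requiring attention is the correct identification of~(i) as the hypothesis to verify (rather than~(ii) or~(iii)), after which both cases are settled by the elementary observation that on a line graph the weight $1 - 2/\deg(\mv)$ is nonpositive everywhere and vanishes on the degree-two part.
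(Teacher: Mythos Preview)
Your proof is correct and follows exactly the route indicated in the paper: verify condition~(i) of Corollary~\ref{cor:ambarzumian-delta-couplings} by observing that on a line graph the weight $1 - 2/\deg(\mv)$ is zero at degree-two vertices and equals $-1$ at degree-one vertices. The paper's proof is the one-line remark that the result follows from~(i), and you have simply spelled out the (entirely elementary) verification.
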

\begin{proof} This follows immediately from condition (i) in Corollary~\ref{cor:ambarzumian-delta-couplings}.
\end{proof}

\begin{remark} It is interesting to compare Theorem~\ref{MainResult} with some results of \cite{PKUnderstanding}. More explicitly, it can be shown that
the Schrödinger operator on the interval $I=[0,1/2]$ with potential $q(x)=\frac{2}{(1+x)^2}$ and $\sigma_0=-1$ and $\sigma_1=\frac{2}{3}$ has the same spectrum as the Laplacian subject to Neumann boundary conditions. So, in general and as elaborated on in \cite{PKUnderstanding}, Ambarzumian-type theorems cannot be expected to hold in utmost generality. However, note that this counterexample does not contradict Corollary~\ref{cor:ambarzumian-delta-couplings} or Theorem~\ref{MainResult}. Indeed, although one has
\begin{equation*}
    \int_\mathcal{G} q\ \mathrm{d}x + 2 \sum_{\mv \in \mV} \frac{\sigma_\mv}{\deg(\mv)}=  \int_0^{1/2} \frac{2}{(1+x)^2}\ \mathrm{d}x -\frac{2}{3} =0\ ,
\end{equation*}
one also has 
\begin{equation*}
    \int_\mathcal{G} q\ \mathrm{d}x +\sum_{\mv \in \mV} \sigma_\mv=  \int_0^{1/2} \frac{2}{(1+x)^2}\ \mathrm{d}x -\frac{1}{3}=\frac{1}{3} > 0\ .
\end{equation*}
\end{remark}

\subsection*{Acknowledgement}{PB was supported by the Deutsche Forschungsgemeinschaft DFG (Grant 397230547). We shall thank V.~Pivovarchik (Odesa) for providing useful references. We also enjoyed discussions with our colleagues as well as with our recent visitors to the FernUniversität in Hagen.
	
	\vspace*{0.5cm}
	
	{\small
		\bibliographystyle{amsalpha}
		\bibliography{Literature}}

\end{document}